\newtheorem{theorem}{Theorem}
\theoremstyle{plain}
\newtheorem{definition}{Definition}
\newtheorem{proposition}{Proposition}
\newtheorem{remark}{Remark}
\numberwithin{equation}{section}
\begin{document}
\title[$\varphi _{h,m}-$convex functions]{Inequalities via $\varphi _{h,m}-$%
convexity}
\author{M.E. \"{O}zdemir$^{\blacklozenge }$}
\address{ATATURK UNIVERSITY, K. K. EDUCATION FACULTY, DEPARTMENT OF
MATHEMATICS, 25240, ERZURUM, TURKEY}
\email{emos@atauni.edu.tr}
\author{Merve Avc\i $^{\clubsuit ,\bigstar }$}
\address{ADIYAMAN UNIVERSITY, DEPARTMENT OF MATHEMATICS, FACULTY OF SCIENCE
AND ARTS, 02040, ADIYAMAN, TURKEY}
\email{mavci@posta.adiyaman.edu.tr}
\thanks{$^{\bigstar }$Corresponding Author}
\keywords{$m-$convex function, $h-$convex function, $\varphi _{h}-$convex
function, $\varphi _{h,m}-$convex function}

\begin{abstract}
In this paper, we define $\varphi _{h,m}-$convex functions and prove some
inequalities for this class.
\end{abstract}

\maketitle

\section{INTRODUCTION}

Let $f:I\subset 
\mathbb{R}
\rightarrow 
\mathbb{R}
$ be a convex function on the interval $I$ of real numbers and $a,b\in I$
with $a<b.$ The inequality%
\begin{equation}
f\left( \frac{a+b}{2}\right) \leq \frac{1}{b-a}\int_{a}^{b}f(x)dx\leq \frac{%
f(a)+f(b)}{2}  \label{hc}
\end{equation}%
is known as Hermite-Hadamard's inequality for convex functions, \cite{JFY}.

In \cite{G}, Toader defined $m-$convexity as the following.

\begin{definition}
\label{d1.1} The function $f:[0,b]\rightarrow 
\mathbb{R}
,$ $b>0$, is said to be $m-$convex where $m\in \lbrack 0,1],$ if we have%
\begin{equation*}
f(tx+m(1-t)y)\leq tf(x)+m(1-t)f(y)
\end{equation*}%
for all $x,y\in \lbrack 0,b]$ and $t\in \lbrack 0,1].$ We say that $f$ is $%
m- $concave if $\left( -f\right) $ is $m-$convex.
\end{definition}

In \cite{S}, Varo\v{s}anec defined the following class of functions.

$I$ and $J$ are intervals in $%
\mathbb{R}
,$ $\left( 0,1\right) \subseteq J$ and functions $h$ and $f$ are real
non-negative functions defined on $J$ and $I$, respectively.

\begin{definition}
\label{d1.6} Let $h:J\subseteq 
\mathbb{R}
\rightarrow 
\mathbb{R}
$ be a non-negative function, $h\neq 0.$ We say that $f:I\rightarrow 
\mathbb{R}
$ is an $h-$convex function, or that $f$ belongs to the class $SX(h,I)$, if $%
f$ is non-negative and for all $x,y\in I$,$\alpha \in \left( 0,1\right) $ we
have%
\begin{equation}
f\left( \alpha x+(1-\alpha )y\right) \leq h(\alpha )f(x)+h(1-\alpha )f(y)
\label{D}
\end{equation}
\end{definition}

If inequality \ref{D} is reversed, then $f$ is said to be $h-$concave, i.e. $%
f\in SV\left( h,I\right) $.

In \cite{SSY}, Sar\i kaya et al. proved a variant of Hadamard inequality
which holds for $h-$convex functions.

\begin{theorem}
\label{t1.9} Let $f\in SX\left( h,I\right) ,$ $a,b\in I,$ with $a<b$ and $%
f\in L_{1}\left( \left[ a,b\right] \right) .$ Then
\end{theorem}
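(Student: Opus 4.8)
The plan is to establish the two-sided estimate
\begin{equation*}
\frac{1}{2h\left( \frac{1}{2}\right) }f\left( \frac{a+b}{2}\right) \leq \frac{1}{b-a}\int_{a}^{b}f(x)\,dx\leq \left[ f(a)+f(b)\right] \int_{0}^{1}h(\alpha )\,d\alpha ,
\end{equation*}
which is the natural $h-$convex analogue of (\ref{hc}), by treating the two inequalities separately and reducing each to an integration over the parameter interval $[0,1]$.

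For the right-hand inequality I would begin from the defining inequality (\ref{D}) applied to the convex combination $\alpha a+(1-\alpha )b$, namely $f(\alpha a+(1-\alpha )b)\leq h(\alpha )f(a)+h(1-\alpha )f(b)$, and integrate both sides in $\alpha $ over $[0,1]$. On the left the substitution $x=\alpha a+(1-\alpha )b$ converts $\int_{0}^{1}f(\alpha a+(1-\alpha )b)\,d\alpha $ into $\frac{1}{b-a}\int_{a}^{b}f(x)\,dx$; on the right I would invoke the symmetry $\int_{0}^{1}h(1-\alpha )\,d\alpha =\int_{0}^{1}h(\alpha )\,d\alpha $ to collect the common factor and obtain $\left[ f(a)+f(b)\right] \int_{0}^{1}h(\alpha )\,d\alpha $.

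For the left-hand inequality the idea is to apply (\ref{D}) at the single value $\alpha =\frac{1}{2}$ to the symmetric pair $x=ta+(1-t)b$ and $y=(1-t)a+tb$, whose midpoint is exactly $\frac{a+b}{2}$ for every $t$. This yields $f\left( \frac{a+b}{2}\right) \leq h\left( \frac{1}{2}\right) \left[ f(ta+(1-t)b)+f((1-t)a+tb)\right] $. Integrating in $t$ over $[0,1]$ and noting that, after the appropriate linear substitution, both integrals on the right reduce to $\frac{1}{b-a}\int_{a}^{b}f(x)\,dx$, I would reach $f\left( \frac{a+b}{2}\right) \leq 2h\left( \frac{1}{2}\right) \frac{1}{b-a}\int_{a}^{b}f(x)\,dx$, and dividing by $2h\left( \frac{1}{2}\right) $ completes this half.

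The computations are all routine linear changes of variable, so I do not expect a genuine obstacle. The one point requiring care is the division by $h\left( \frac{1}{2}\right) $ in the left inequality, which is legitimate only when $h\left( \frac{1}{2}\right) >0$; I would therefore record this positivity as a standing requirement (consistent with $h$ non-negative and $h\neq 0$) rather than leave it implicit. The symmetry $\int_{0}^{1}h(\alpha )\,d\alpha =\int_{0}^{1}h(1-\alpha )\,d\alpha $, used in both halves, is the only other small fact to note.
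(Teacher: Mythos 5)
Your argument is correct: integrating the defining inequality (\ref{D}) over $\alpha\in[0,1]$ gives the right-hand estimate, and applying (\ref{D}) at $\alpha=\tfrac{1}{2}$ to the symmetric pair $ta+(1-t)b$, $(1-t)a+tb$ and integrating in $t$ gives the left-hand one; you are also right to flag $h\left(\tfrac{1}{2}\right)>0$ as the one hypothesis needed for the division. Note that the paper itself offers no proof of this statement --- Theorem \ref{t1.9} is quoted from the reference of Sar\i kaya, Sa\u{g}lam and Y\i ld\i r\i m as background --- but your argument is the standard one used in that source, so there is nothing further to compare. The only small point worth recording alongside the positivity of $h\left(\tfrac{1}{2}\right)$ is that $\int_{0}^{1}h(\alpha)\,d\alpha$ must be finite for the right-hand bound to carry content, i.e.\ $h\in L_{1}\left[0,1\right]$.
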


\begin{equation}
\frac{1}{2h\left( \frac{1}{2}\right) }f\left( \frac{a+b}{2}\right) \leq 
\frac{1}{b-a}\int_{a}^{b}f\left( x\right) dx\leq \left[ f\left( a\right)
+f\left( b\right) \right] \int_{0}^{1}h\left( \alpha \right) d\alpha .
\label{hh}
\end{equation}

In \cite{OAS}, \"{O}zdemir et al. defined $\left( h,m\right) -$convexity and
obtained Hermite-Hadamard-type inequalities as following .

\begin{definition}
\label{d1.7} Let $h:J\subset 
\mathbb{R}
\rightarrow 
\mathbb{R}
$ be a non-negative function. We say that $f:\left[ 0,b\right] \rightarrow 
\mathbb{R}
$ is a $\left( h,m\right) -$convex function, if $f$ is non-negative and for
all $x,y\in \left[ 0,b\right] ,m\in \lbrack 0,1]$ and $\alpha \in (0,1),$ we
have 
\begin{equation*}
f(\alpha x+m(1-\alpha )y)\leq h(\alpha )f(x)+mh(1-\alpha )f(y).
\end{equation*}%
If the inequality is reversed, then $f$ is said to be $\left( h,m\right) -$%
concave function on $[0,b].$
\end{definition}

\begin{theorem}
\label{t1.11} Let $f:\left[ 0,\infty \right) \rightarrow 
\mathbb{R}
$ be an $\left( h,m\right) -$convex function with $m\in \left( 0,1\right] ,$ 
$t\in \left[ 0,1\right] .$ If \ $0\leq a<b<\infty $ and $f\in $ $L_{1}\left[
ma,b\right] ,$ then the following inequality holds:%
\begin{eqnarray*}
&&\frac{1}{m+1}\left[ \frac{1}{mb-a}\int_{a}^{mb}f\left( x\right) dx+\frac{1%
}{b-ma}\int_{ma}^{b}f\left( x\right) dx\right]  \\
&& \\
&\leq &\left[ f(a)+f(b)\right] \int_{0}^{1}h\left( t\right) dt.
\end{eqnarray*}
\end{theorem}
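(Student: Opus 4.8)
The plan is to bound each of the two integrals on the left-hand side separately using the defining inequality of $(h,m)$-convexity (Definition \ref{d1.7}), and then add the two resulting estimates. The key observation is that each integral arises from a suitable linear parametrization of its interval of integration as a combination of the form $\alpha x + m(1-\alpha)y$, which is precisely the shape to which the definition applies.

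First I would handle $\int_{a}^{mb} f(x)\,dx$. Applying the definition with $x=a$, $y=b$ and $\alpha=t$ gives
\[
f(ta+m(1-t)b)\leq h(t)f(a)+mh(1-t)f(b),
\]
valid for $t\in(0,1)$. Integrating in $t$ over $[0,1]$ and performing the substitution $u=ta+m(1-t)b$ (so that $du=(a-mb)\,dt$, with $u=mb$ at $t=0$ and $u=a$ at $t=1$) converts the left-hand side into $\frac{1}{mb-a}\int_{a}^{mb}f(u)\,du$. On the right-hand side I would use the reflection symmetry $\int_{0}^{1} h(1-t)\,dt=\int_{0}^{1} h(t)\,dt$ to obtain
\[
\frac{1}{mb-a}\int_{a}^{mb}f(x)\,dx\leq\left[f(a)+mf(b)\right]\int_{0}^{1} h(t)\,dt.
\]

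Next I would repeat the argument for $\int_{ma}^{b} f(x)\,dx$, this time interchanging the roles of $a$ and $b$, i.e. applying the definition with $x=b$, $y=a$ and $\alpha=t$, which yields $f(tb+m(1-t)a)\leq h(t)f(b)+mh(1-t)f(a)$. The analogous substitution $u=tb+m(1-t)a$ now runs from $ma$ to $b$ and produces
\[
\frac{1}{b-ma}\int_{ma}^{b} f(x)\,dx\leq\left[f(b)+mf(a)\right]\int_{0}^{1} h(t)\,dt.
\]
Finally I would add the two displayed estimates; the right-hand coefficients combine to $(1+m)\bigl[f(a)+f(b)\bigr]\int_{0}^{1} h(t)\,dt$, and dividing through by $m+1$ gives exactly the claimed inequality. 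I do not anticipate a genuine obstacle here, since the argument is routine once the two parametrizations are in place. The only points that require care are tracking the orientation and sign of the change of variables (so that $\frac{1}{a-mb}\int_{mb}^{a}$ becomes $\frac{1}{mb-a}\int_{a}^{mb}$) and invoking the reflection symmetry of $\int_{0}^{1} h$ to merge the $h(t)$ and $h(1-t)$ contributions into a single factor.
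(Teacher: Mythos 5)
Your proof is correct and follows essentially the same route as the paper, which does not prove Theorem \ref{t1.11} directly (it is quoted from \cite{OAS}) but proves its generalization in Theorem \ref{t2.2}: apply the defining inequality along the two parametrizations $ta+m(1-t)b$ and $tb+m(1-t)a$, integrate in $t$, change variables, and add. The only cosmetic difference is that the paper also writes down the two $t\mapsto 1-t$ reflections of these inequalities and sums all four (each reflected pair integrating to the same quantity), whereas you observe directly that $\int_{0}^{1}h(1-t)\,dt=\int_{0}^{1}h(t)\,dt$, so your two-inequality version is a slightly leaner but equivalent argument.
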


Let us consider a function $\varphi :[a,b]\rightarrow \lbrack a,b]$ where $%
[a,b]\subset 
\mathbb{R}
.$ In \cite{Y}, Youness defined the $\varphi -$convex functions as the
following:

\begin{definition}
\label{d1.8} A function $f:[a,b]\rightarrow 
\mathbb{R}
$ is said to be $\varphi -$convex on $[a,b]$ if for every two points $x\in
\lbrack a,b],y\in \lbrack a,b]$ and $t\in \lbrack 0,1],$ the following
inequality holds:%
\begin{equation*}
f\left( t\varphi (x)+(1-t)\varphi (y)\right) \leq tf(\varphi
(x))+1-tf(\varphi (y)).
\end{equation*}
\end{definition}

In \cite{Z}, M.Z. Sarikaya defined $\varphi _{h}-$convex functions and
obtained the following inequalities for this class.

\begin{definition}
\label{d1.9} Let $I$ be an interval in $%
\mathbb{R}
$ and $h:(0,1)\rightarrow (0,\infty )$ be a given function. We say that a
function $f:I\rightarrow \lbrack 0,\infty )$ is $\varphi _{h}-$convex if 
\begin{equation}
f\left( t\varphi (x)+(1-t)\varphi (y)\right) \leq h(t)f(\varphi
(x))+h(1-t)f(\varphi (y))  \label{12}
\end{equation}%
for all $x,y\in I$ and $t\in (0,1).$ If inequality (\ref{12}) is reversed,
then $f$ is said to be $\varphi _{h}-$concave.
\end{definition}

\begin{theorem}
\label{t1.13} Let $h:\left( 0,1\right) \rightarrow \left( 0,\infty \right) $
be a given function. If $f:I\rightarrow \lbrack 0,\infty )$ is Lebesgue
integrable and $\varphi _{h}-$convex for continuous function $\varphi
:[a,b]\rightarrow \lbrack a,b],$ then the following inequality holds:%
\begin{eqnarray*}
&&\frac{1}{\varphi (b)-\varphi (a)}\int_{\varphi (a)}^{\varphi
(b)}f(x)f(\varphi (a)+\varphi (b)-x)dx \\
&& \\
&\leq &\left[ f^{2}\left( \varphi (x)\right) +f^{2}\left( \varphi (y)\right) %
\right] \int_{0}^{1}h(t)h\left( 1-t\right) dt+2f\left( \varphi (x)\right)
f\left( \varphi (y)\right) \int_{0}^{1}h^{2}(t)dt.
\end{eqnarray*}
\end{theorem}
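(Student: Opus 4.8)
The plan is to reduce the product integral on the left-hand side to an integral over $t\in[0,1]$ by a linear change of variables, apply the defining inequality of $\varphi_h$-convexity to each factor, multiply the two resulting bounds, and integrate. First I would make the substitution $x=t\varphi(a)+(1-t)\varphi(b)$, so that $dx=(\varphi(a)-\varphi(b))\,dt$ and the reflected point becomes $\varphi(a)+\varphi(b)-x=(1-t)\varphi(a)+t\varphi(b)$. As $t$ runs over $[0,1]$ the variable $x$ traverses the interval between $\varphi(a)$ and $\varphi(b)$, and reversing the orientation absorbs the sign, so
\begin{equation*}
\frac{1}{\varphi(b)-\varphi(a)}\int_{\varphi(a)}^{\varphi(b)}f(x)f(\varphi(a)+\varphi(b)-x)\,dx=\int_0^1 f\bigl(t\varphi(a)+(1-t)\varphi(b)\bigr)\,f\bigl((1-t)\varphi(a)+t\varphi(b)\bigr)\,dt.
\end{equation*}
The continuity of $\varphi$ together with the integrability of $f$ guarantees that the integrand is measurable and that this change of variables is legitimate.

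Next I would bound each factor separately using Definition \ref{d1.9} with the endpoints $\varphi(a),\varphi(b)$ (i.e. taking $x=a$, $y=b$ in (\ref{12})), obtaining
\begin{equation*}
f\bigl(t\varphi(a)+(1-t)\varphi(b)\bigr)\leq h(t)f(\varphi(a))+h(1-t)f(\varphi(b)),
\end{equation*}
\begin{equation*}
f\bigl((1-t)\varphi(a)+t\varphi(b)\bigr)\leq h(1-t)f(\varphi(a))+h(t)f(\varphi(b)).
\end{equation*}
Since $f\geq 0$ and $h>0$, both sides are non-negative and I may multiply the two inequalities. Expanding the product on the right and collecting like terms gives
\begin{equation*}
h(t)h(1-t)\bigl[f^{2}(\varphi(a))+f^{2}(\varphi(b))\bigr]+\bigl[h^{2}(t)+h^{2}(1-t)\bigr]f(\varphi(a))f(\varphi(b)).
\end{equation*}

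Finally I would integrate this pointwise bound over $t\in[0,1]$. The substitution $t\mapsto 1-t$ shows that $\int_0^1 h^2(1-t)\,dt=\int_0^1 h^2(t)\,dt$, so the cross term becomes $2f(\varphi(a))f(\varphi(b))\int_0^1 h^2(t)\,dt$, while the first term is $[f^2(\varphi(a))+f^2(\varphi(b))]\int_0^1 h(t)h(1-t)\,dt$; combined with the identity from the first step this yields the claimed inequality, with $\varphi(a),\varphi(b)$ playing the roles of the points $\varphi(x),\varphi(y)$ appearing in the statement. I do not anticipate a genuine obstacle here: every step is elementary, and the only points requiring care are matching the reflected argument $(1-t)\varphi(a)+t\varphi(b)$ correctly when applying convexity to the second factor, and invoking the symmetry $t\mapsto 1-t$ to merge the two $h^2$ integrals into the single factor $2\int_0^1 h^2(t)\,dt$.
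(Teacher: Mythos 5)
Your proof is correct and follows essentially the same route as the paper's argument (given there for the more general Theorem \ref{t2.1} and specialized to $m=1$): apply the $\varphi_h$-convexity inequality at $t$ and at $1-t$, multiply, integrate over $[0,1]$, and convert via the substitution $x=t\varphi(a)+(1-t)\varphi(b)$. You also correctly read the statement's $\varphi(x),\varphi(y)$ on the right-hand side as $\varphi(a),\varphi(b)$, which is how the inequality must be interpreted for it to make sense.
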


\begin{theorem}
\label{t1.14} Let $h:\left( 0,1\right) \rightarrow \left( 0,\infty \right) $
be a given function. If $f,g:I\rightarrow \lbrack 0,\infty )$ is Lebesgue
integrable and $\varphi _{h}-$convex for continuous function $\varphi
:[a,b]\rightarrow \lbrack a,b],$ then the following inequality holds:%
\begin{eqnarray*}
&&\frac{1}{\varphi \left( b\right) -\varphi \left( a\right) -}\int_{\varphi
(a)}^{\varphi (b)}f\left( x\right) g(x)dx \\
&\leq &M(a,b)\int_{0}^{1}h^{2}(t)dt+N(a,b)\int_{0}^{1}h(t)h(1-t)dt
\end{eqnarray*}%
where 
\begin{equation*}
M(a,b)=f\left( \varphi (x)\right) g\left( \varphi (x)\right) +f\left(
\varphi (y)\right) g\left( \varphi (y)\right) 
\end{equation*}%
and 
\begin{equation*}
N(a,b)=f\left( \varphi (x)\right) g\left( \varphi (y)\right) +f\left(
\varphi (y)\right) g\left( \varphi (x)\right) .
\end{equation*}
\end{theorem}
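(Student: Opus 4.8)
The plan is to adapt the product-of-convex-functions technique underlying Theorem \ref{t1.13} to two distinct functions $f$ and $g$. First I would invoke the defining inequality (\ref{12}) for each function at the endpoints, i.e. with $x=a$ and $y=b$, obtaining
\begin{equation*}
f\left( t\varphi (a)+(1-t)\varphi (b)\right) \leq h(t)f(\varphi (a))+h(1-t)f(\varphi (b))
\end{equation*}
together with the analogous bound for $g$, valid for every $t\in (0,1)$. This identification of $x$ with $a$ and $y$ with $b$ is exactly what makes the quantities $M(a,b)$ and $N(a,b)$ meaningful in the statement.

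Since $f,g\geq 0$ and $h>0$, both right-hand sides are non-negative, so I may multiply the two inequalities without reversing their direction. Expanding the product of the two upper bounds yields four terms, with coefficients $h^{2}(t)$, $h(t)h(1-t)$, $h(1-t)h(t)$, and $h^{2}(1-t)$, attached respectively to $f(\varphi (a))g(\varphi (a))$, $f(\varphi (a))g(\varphi (b))$, $f(\varphi (b))g(\varphi (a))$, and $f(\varphi (b))g(\varphi (b))$.

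Next I would integrate both sides in $t$ over $[0,1]$. On the right-hand side the substitution $s=1-t$ gives $\int_{0}^{1}h^{2}(1-t)\,dt=\int_{0}^{1}h^{2}(t)\,dt$, which lets me collect the two pure-square terms into the coefficient of $M(a,b)$ and the two cross terms into the coefficient of $N(a,b)$. On the left-hand side the linear change of variable $u=t\varphi (a)+(1-t)\varphi (b)$ — legitimate because $\varphi $ is continuous and the integrand $fg$ is integrable — converts $\int_{0}^{1}$ into $\frac{1}{\varphi (b)-\varphi (a)}\int_{\varphi (a)}^{\varphi (b)}$ and produces exactly $\int_{\varphi (a)}^{\varphi (b)}f(u)g(u)\,du$, matching the claimed left-hand side.

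The computation is essentially routine; the one place demanding care is the multiplication step, where non-negativity of $f$, $g$, and $h$ is indispensable — without it the product of the two bounds could flip the inequality, so this is where the hypotheses $f,g:I\rightarrow [0,\infty )$ and $h:(0,1)\rightarrow (0,\infty )$ are genuinely used. A secondary point is checking that the limits transform correctly: as $t$ runs from $0$ to $1$ the new variable $u$ runs from $\varphi (b)$ down to $\varphi (a)$, and the sign of the Jacobian cancels this orientation reversal, so the factor $\frac{1}{\varphi (b)-\varphi (a)}$ emerges with the correct sign.
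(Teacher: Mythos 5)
Your proposal is correct and follows essentially the same route the paper itself uses: the paper proves this statement as the $m=1$ case of Theorem \ref{t2.3}, whose proof likewise writes the defining inequality (\ref{12}) for $f$ and for $g$, multiplies the two bounds (using non-negativity), integrates over $t\in[0,1]$, collects the square and cross terms via $\int_{0}^{1}h^{2}(1-t)\,dt=\int_{0}^{1}h^{2}(t)\,dt$, and finishes with the substitution $u=t\varphi(x)+m(1-t)\varphi(y)$. Your explicit identification of $x$ with $a$ and $y$ with $b$, and your remark about the orientation of the change of variables, silently repair small typographical slips in the statement but do not change the argument.
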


The aim of this paper is to define a new class of convex function and then
establish new Hermite-Hadamard-type inequalities.

\section{MAIN RESULTS}

In the begining we give a new definition $\varphi _{h,m}-$convex function.

$I$ and $J$ are intervals on $%
\mathbb{R}
,\left( 0,1\right) \subseteq J$ and functions $h$ and $f$ are real
non-negative functions defined on $J$ and $I$, respectively.

\begin{definition}
\label{d2.1} Let $h:J\subset 
\mathbb{R}
\rightarrow 
\mathbb{R}
$ be a non-negative function, $h\neq 0.$ We say that $f:\left[ 0,b\right]
\subseteq \left[ 0,\infty \right) \rightarrow 
\mathbb{R}
$ is a $\varphi _{h,m}-$convex function, if $f$ is non-negative and
satisfies the inequality%
\begin{equation}
f(t\varphi (x)+m(1-t)\varphi (y))\leq h(t)f(\varphi (x))+mh(1-t)f(\varphi
(y))  \label{h}
\end{equation}%
for all $x,y\in \left[ 0,b\right] ,t\in \left( 0,1\right) .$
\end{definition}

If the inequality (\ref{h}) is reversed, then $f$ is said to be $\varphi
_{h,m}-$concave function on $\left[ 0,b\right] .$

Obviously, if we choose $h(t)=t$ and $m=1$ we have non-negative $\varphi -$%
convex functions$.$ If we choose $m=1,$ then we have $\varphi _{h}-$convex
functions. If we choose $m=1$ and $\varphi (x)=x$ the two definitions $%
\varphi _{h,m}-$convex and $h-$convex functions become identical.

The following results were obtained for $\varphi _{h,m}-$convex functions.

\begin{proposition}
\label{prop 2.1} If $f,g$ are $\varphi _{h,m}-$convex functions and $\lambda
>0,$ then $f+g$ and $\lambda f$ are $\varphi _{h,m}-$convex functions.
\end{proposition}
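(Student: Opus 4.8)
The plan is to verify the defining inequality \eqref{h} directly for the two candidate functions $f+g$ and $\lambda f$, treating each case separately. The only tools required are the hypothesis that $f$ and $g$ each satisfy \eqref{h} for the same $h$, $m$, and $\varphi$, together with elementary properties of the real numbers: that a sum of non-negative reals is non-negative, that a positive scalar multiple of a non-negative real is non-negative, and that adding two true inequalities (or scaling a true inequality by $\lambda>0$) preserves their direction.

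First I would treat $f+g$. I would start from the left-hand side $(f+g)(t\varphi(x)+m(1-t)\varphi(y))$, split it as $f(t\varphi(x)+m(1-t)\varphi(y))+g(t\varphi(x)+m(1-t)\varphi(y))$, and apply \eqref{h} to each summand. This yields the bound $h(t)f(\varphi(x))+mh(1-t)f(\varphi(y))+h(t)g(\varphi(x))+mh(1-t)g(\varphi(y))$. Regrouping the $h(t)$ terms and the $mh(1-t)$ terms gives exactly $h(t)(f+g)(\varphi(x))+mh(1-t)(f+g)(\varphi(y))$, which is the required form. Non-negativity of $f+g$ follows since $f,g\geq 0$.

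Next I would treat $\lambda f$. Applying \eqref{h} to $f$ and multiplying through by $\lambda>0$ (which preserves the inequality) gives
\begin{equation*}
\lambda f(t\varphi(x)+m(1-t)\varphi(y))\leq \lambda h(t)f(\varphi(x))+\lambda mh(1-t)f(\varphi(y)),
\end{equation*}
and rewriting the right-hand side as $h(t)(\lambda f)(\varphi(x))+mh(1-t)(\lambda f)(\varphi(y))$ completes the verification; non-negativity of $\lambda f$ is immediate from $\lambda>0$ and $f\geq 0$. I do not anticipate any genuine obstacle here, as the statement is essentially the assertion that the class $\varphi_{h,m}$-convex functions is closed under the cone operations of addition and non-negative scaling, and this follows formally from the linearity of the expression on both sides of \eqref{h} in $f$. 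The only point meriting a word of care is confirming that the non-negativity requirement built into Definition \ref{d2.1} is preserved, which I would note explicitly in each case rather than leave implicit.
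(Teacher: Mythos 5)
Your proof is correct and follows essentially the same route as the paper's: apply the defining inequality \eqref{h} to $f$ and $g$ separately, add the two inequalities for the sum case, and scale by $\lambda>0$ for the second case. Your explicit remark that non-negativity is preserved is a small point the paper leaves implicit, but otherwise the arguments are identical.
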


\begin{proof}
From the definition of $\varphi _{h,m}-$convex functions we can write%
\begin{equation*}
f(t\varphi (x)+m(1-t)\varphi (y))\leq h(t)f(\varphi (x))+mh(1-t)f(\varphi
(y)))
\end{equation*}%
and 
\begin{equation*}
g(t\varphi (x)+m(1-t)\varphi (y))\leq h(t)g(\varphi (x))+mh(1-t)g(\varphi
(y)))
\end{equation*}%
for all $x,y\in \left[ 0,b\right] ,m\in (0,1]$ and $t\in \left[ 0,1\right] .$
If we add the above inequalities we get 
\begin{equation*}
\left( f+g\right) (t\varphi (x)+m(1-t)\varphi (y))\leq h(t)\left( f+g\right)
(\varphi (x))+mh(1-t)\left( f+g\right) (\varphi (y)).
\end{equation*}%
And also we have 
\begin{equation*}
\lambda f(t\varphi (x)+m(1-t)\varphi (y))\leq h(t)\lambda f(\varphi
(x))+mh(1-t)\lambda f(\varphi (y)))
\end{equation*}%
which completes the proof.
\end{proof}

\begin{proposition}
\label{prop 2.2} Let $h_{1},h_{2}:\left( 0,1\right) \rightarrow \left(
0,\infty \right) $ be functions such that $h_{2}\left( t\right) \leq
h_{1}\left( t\right) $ for all $t\in \left( 0,1\right) .$ If $f$ is $\varphi
_{h_{2},m}-$convex on $[0,b],$ then for all $x,y\in \left[ 0,b\right] $ $f$
is $\varphi _{h_{1},m}-$convex on $[0,b].$
\end{proposition}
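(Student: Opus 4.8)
The plan is to unwind the definition of $\varphi _{h_{2},m}-$convexity and then replace $h_{2}$ by the larger function $h_{1}$ term by term, exploiting the non-negativity that is built into Definition \ref{d2.1}. First I would fix arbitrary $x,y\in \left[ 0,b\right] $ and $t\in \left( 0,1\right) $ and record the defining inequality for $f$ as a $\varphi _{h_{2},m}-$convex function, namely
\[
f(t\varphi (x)+m(1-t)\varphi (y))\leq h_{2}(t)f(\varphi (x))+mh_{2}(1-t)f(\varphi (y)).
\]

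Next I would note that since $t\in \left( 0,1\right) $ we also have $1-t\in \left( 0,1\right) $, so the hypothesis $h_{2}\leq h_{1}$ on $\left( 0,1\right) $ may be applied at both $t$ and $1-t$, giving $h_{2}(t)\leq h_{1}(t)$ and $h_{2}(1-t)\leq h_{1}(1-t)$. Because $f$ is non-negative by Definition \ref{d2.1}, the values $f(\varphi (x))$ and $f(\varphi (y))$ are non-negative, and $m\in (0,1]$ is non-negative as well; hence multiplying these pointwise inequalities by the non-negative factors preserves their direction, yielding $h_{2}(t)f(\varphi (x))\leq h_{1}(t)f(\varphi (x))$ and $mh_{2}(1-t)f(\varphi (y))\leq mh_{1}(1-t)f(\varphi (y))$.

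Finally I would add these two estimates and chain them with the displayed defining inequality to obtain
\[
f(t\varphi (x)+m(1-t)\varphi (y))\leq h_{1}(t)f(\varphi (x))+mh_{1}(1-t)f(\varphi (y)),
\]
which is exactly the requirement for $f$ to be $\varphi _{h_{1},m}-$convex; since $x,y$ and $t$ were arbitrary, this completes the argument.

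The proof is essentially a monotonicity-in-$h$ observation and presents no genuine obstacle. The single point that must not be overlooked, and which I would flag as the crux, is the reliance on the non-negativity of $f$ (and of $m$): without it, multiplying the inequality $h_{2}\leq h_{1}$ by $f(\varphi (x))$ or $f(\varphi (y))$ could reverse the direction and the conclusion would fail. This is precisely why non-negativity is imposed in Definition \ref{d2.1}.
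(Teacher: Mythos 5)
Your proof is correct and follows exactly the paper's own argument: start from the defining inequality for $\varphi _{h_{2},m}-$convexity and bound $h_{2}(t)$ and $h_{2}(1-t)$ by $h_{1}(t)$ and $h_{1}(1-t)$ using the non-negativity of $f$ and $m$. The only difference is that you make explicit the reliance on non-negativity, which the paper leaves implicit.
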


\begin{proof}
Since $f$ is $\varphi _{h_{2},m}-$convex on $[0,b],$ for all $x,y\in \left[
0,b\right] $ and $t\in (0,1),$ we have 
\begin{eqnarray*}
f(t\varphi (x)+m(1-t)\varphi (y)) &\leq &h_{2}(t)f(\varphi
(x))+mh_{2}(1-t)f(\varphi (y))) \\
&\leq &h_{1}(t)f(\varphi (x))+mh_{1}(1-t)f(\varphi (y)))
\end{eqnarray*}%
which completes the proof.
\end{proof}

\begin{theorem}
\label{t2.0} Let $f$ be $\varphi _{h,m}-$convex function. Then i) if $%
\varphi $ is linear, then $f\circ \varphi $ is $\left( h-m\right) -$convex
and ii) if $f$ is increasing and $\varphi $ is $m-$convex, then $f\circ
\varphi $ is $\left( h-m\right) -$convex.
\end{theorem}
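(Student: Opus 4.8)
The plan is to directly unfold the definition of $\varphi_{h,m}$-convexity and re-read the resulting inequality as the definition of $(h,m)$-convexity (Definition \ref{d1.7}) applied to the composite function $f\circ\varphi$. The goal in both parts is to verify the inequality
\begin{equation*}
(f\circ\varphi)(tu+m(1-t)v)\leq h(t)(f\circ\varphi)(u)+mh(1-t)(f\circ\varphi)(v)
\end{equation*}
for all $u,v$ and $t\in(0,1)$, which is exactly what it means for $f\circ\varphi$ to be $(h,m)$-convex (the ``$(h-m)$'' notation in the statement). The starting point in each case is the hypothesis
\begin{equation*}
f(t\varphi(x)+m(1-t)\varphi(y))\leq h(t)f(\varphi(x))+mh(1-t)f(\varphi(y)).
\end{equation*}

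For part (i), I would use linearity of $\varphi$ in the form $\varphi(tx+m(1-t)y)=t\varphi(x)+m(1-t)\varphi(y)$. Substituting this identity into the left-hand side of the $\varphi_{h,m}$-convexity inequality turns the argument $t\varphi(x)+m(1-t)\varphi(y)$ into $\varphi(tx+m(1-t)y)$, so the left-hand side becomes $(f\circ\varphi)(tx+m(1-t)y)$, while the right-hand side is already $h(t)(f\circ\varphi)(x)+mh(1-t)(f\circ\varphi)(y)$. This immediately gives the $(h,m)$-convexity of $f\circ\varphi$. The one point to be careful about is what ``linear'' means here: for the affine combination to pass through $\varphi$ one needs $\varphi$ to preserve combinations of the form $t\,\cdot+m(1-t)\,\cdot$, i.e. genuine linearity $\varphi(\alpha u+\beta v)=\alpha\varphi(u)+\beta\varphi(v)$; I would state this assumption explicitly.

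For part (ii), the hypotheses are that $f$ is increasing and $\varphi$ is $m$-convex, so $\varphi(tx+m(1-t)y)\leq t\varphi(x)+m(1-t)\varphi(y)$ by Definition \ref{d1.1}. The key step is to compose: applying the increasing function $f$ to both sides of the $m$-convexity inequality for $\varphi$ preserves the direction of the inequality, giving
\begin{equation*}
f(\varphi(tx+m(1-t)y))\leq f(t\varphi(x)+m(1-t)\varphi(y)).
\end{equation*}
Chaining this with the $\varphi_{h,m}$-convexity bound on the right-hand quantity yields
\begin{equation*}
(f\circ\varphi)(tx+m(1-t)y)\leq h(t)(f\circ\varphi)(x)+mh(1-t)(f\circ\varphi)(y),
\end{equation*}
which is the desired $(h,m)$-convexity. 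The main obstacle here is monotonicity: the argument relies on $f$ being nondecreasing so that the $m$-convexity inequality for $\varphi$ is not reversed when passed through $f$, and one should note in passing that the values $\varphi(tx+m(1-t)y)$ and $t\varphi(x)+m(1-t)\varphi(y)$ must lie in the domain $[0,b]$ of $f$ for the composition to make sense — which is guaranteed since $\varphi$ maps into $[a,b]\subset[0,\infty)$. Both parts are short once the right substitution is made; the essential content is recognizing that linearity lets $\varphi$ commute with the convex combination, while monotonicity lets $f$ respect the $m$-convexity inequality.
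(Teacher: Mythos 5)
Your argument is correct and is essentially identical to the paper's proof: part (i) passes the combination $tx+m(1-t)y$ through $\varphi$ by linearity, and part (ii) chains the $m$-convexity inequality for $\varphi$ through the increasing function $f$ before applying $\varphi_{h,m}$-convexity. Your added remarks on what ``linear'' must mean and on the domain of the composition are reasonable clarifications but do not change the argument.
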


\begin{proof}
i) From $\varphi _{h,m}-$convexity of $f$ and linearity of $\varphi ,$ we
have%
\begin{eqnarray*}
f\circ \varphi \left[ tx+m(1-t)y\right] &=&f\left[ \varphi \left(
tx+m(1-t)y\right) \right] \\
&=&f\left[ t\varphi (x)+m(1-t)\varphi (y)\right] \\
&\leq &h(t)f\circ \varphi (x)+mh\left( 1-t\right) f\circ \varphi (y)
\end{eqnarray*}%
which completes the proof for first case.

ii) From $m-$convexity of $\varphi $, we have%
\begin{equation*}
\varphi \left[ tx+m(1-t)y\right] \leq t\varphi (x)+m(1-t)\varphi (y).
\end{equation*}%
Since $f$ is increasing we can write 
\begin{eqnarray*}
f\circ \varphi \left[ tx+m(1-t)y\right] &\leq &f\left[ t\varphi
(x)+m(1-t)\varphi (y)\right] \\
&\leq &h(t)f\circ \varphi (x)+mh\left( 1-t\right) f\circ \varphi (y).
\end{eqnarray*}%
This completes the proof for this case.
\end{proof}

\begin{theorem}
\label{t2.00} Let $h:J\subseteq 
\mathbb{R}
\rightarrow 
\mathbb{R}
$ be a non-negative function, $h\neq 0$ and $f:\left[ 0,b\right] \subseteq %
\left[ 0,\infty \right) \rightarrow 
\mathbb{R}
$ be an $\varphi _{h,m}-$convex function with $m\in (0,1]$ and $t\in \left(
0,1\right) .$ Then for all $x,y\in \lbrack 0,b],$ the function $%
g:[0,1]\rightarrow 
\mathbb{R}
,$ $g(t)=f(t\varphi (x)+m(1-t)\varphi (y))$ is $\left( h-m\right) -$convex
on $\left[ 0,b\right] .$
\end{theorem}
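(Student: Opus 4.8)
The plan is to peel off the definition of $\left( h,m\right) $-convexity (Definition \ref{d1.7}) and reduce the statement to a single application of the $\varphi _{h,m}$-convexity of $f$. Fix $x,y\in \lbrack 0,b]$ and abbreviate $u=\varphi (x)$, $v=\varphi (y)$, so that $g(t)=f\big(tu+m(1-t)v\big)$. To show that $g$ is $\left( h,m\right) $-convex I must verify, for arbitrary $t_{1},t_{2}\in \lbrack 0,1]$, $\alpha \in (0,1)$ and $m\in (0,1]$, the inequality
\[
g\big(\alpha t_{1}+m(1-\alpha )t_{2}\big)\leq h(\alpha )g(t_{1})+mh(1-\alpha )g(t_{2}).
\]

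First I would substitute the defining formula of $g$ into the left-hand side, so that $g\big(\alpha t_{1}+m(1-\alpha )t_{2}\big)=f(A)$ with
\[
A=\big(\alpha t_{1}+m(1-\alpha )t_{2}\big)u+m\big(1-\alpha t_{1}-m(1-\alpha )t_{2}\big)v.
\]
The decisive step is to exhibit $A$ as an $\alpha $-$m$ combination of the two inner points $\psi (t_{1})=t_{1}u+m(1-t_{1})v$ and $\psi (t_{2})=t_{2}u+m(1-t_{2})v$; that is, to write $A=\alpha \psi (t_{1})+m(1-\alpha )\psi (t_{2})$ so that $\psi (t_{1})$ and $\psi (t_{2})$ take over the roles of $\varphi (x)$ and $\varphi (y)$ in inequality (\ref{h}). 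Granting this, the $\varphi _{h,m}$-convexity of $f$ yields directly
\[
f(A)\leq h(\alpha )f\big(\psi (t_{1})\big)+mh(1-\alpha )f\big(\psi (t_{2})\big),
\]
and since $f\big(\psi (t_{1})\big)=g(t_{1})$ and $f\big(\psi (t_{2})\big)=g(t_{2})$ by the definition of $g$, the asserted inequality follows immediately.

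The step I expect to be the main obstacle is precisely this identification of $A$. Collecting the coefficient of $u=\varphi (x)$ in $\alpha \psi (t_{1})+m(1-\alpha )\psi (t_{2})$ reproduces the coefficient $\alpha t_{1}+m(1-\alpha )t_{2}$ appearing in $A$ without difficulty, but the coefficient of $v=\varphi (y)$ picks up an extra factor of $m$ (hence a term in $m^{2}$), whereas the corresponding coefficient in $A$ carries only a single power of $m$. Reconciling these two expressions for the $v$-coefficient --- either by checking that they coincide or by controlling their difference via $m\in (0,1]$ together with the non-negativity of $\varphi $ and $f$ --- is the genuinely delicate point, and it is exactly here that the $m$-convexity structure makes itself felt (for $m=1$ the whole computation collapses to the ordinary affine identity and the matching is trivial). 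I would therefore concentrate the effort on this coefficient bookkeeping; once it is settled, the remainder of the argument is the one-line substitution into (\ref{h}) described above.
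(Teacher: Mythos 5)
Your route is the same one the paper takes: write $g\bigl(\alpha t_{1}+m(1-\alpha )t_{2}\bigr)=f(A)$ and try to recognize $A$ as $\alpha \psi (t_{1})+m(1-\alpha )\psi (t_{2})$ with $\psi (t)=t\varphi (x)+m(1-t)\varphi (y)$, then apply (\ref{h}) once. But the difficulty you flag at the end is not bookkeeping that can be ``settled''; it is a genuine failure of the identity, and your proposal leaves the proof open at exactly that point. Writing $u=\varphi (x)$, $v=\varphi (y)$, a direct computation of the $v$-coefficients gives
\begin{equation*}
A-\bigl[\alpha \psi (t_{1})+m(1-\alpha )\psi (t_{2})\bigr]=m(1-\alpha )(1-m)\,v,
\end{equation*}
which vanishes only when $m=1$ (or $\alpha =1$ or $v=0$). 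So for $m<1$ the point $A$ at which you must evaluate $f$ is genuinely different from the point to which (\ref{h}) applies, and neither of your suggested repairs works: the coefficients do not coincide, and ``controlling the difference'' would require comparing $f$ at two distinct points, which is impossible without a monotonicity or continuity assumption on $f$ that the theorem does not provide.

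It is worth noting that the paper's own proof commits precisely this error: its second displayed equality asserts $A=\lambda _{1}\psi (t_{1})+m\lambda _{2}\psi (t_{2})$ under the normalization $\lambda _{1}+\lambda _{2}=1$, whereas the identity actually requires $\lambda _{1}+m\lambda _{2}=1$. So you have correctly located the weak point of the argument, but you have not closed it, and in fact it cannot be closed along these lines: as stated, the theorem is only established by this method in the case $m=1$, where the computation collapses to the ordinary affine identity exactly as you observe.
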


\begin{proof}
Since $f$ is $\varphi _{h,m}-$convex function, for $x,y\in \lbrack 0,b],$ $%
\lambda _{1},\lambda _{2}\in \left( 0,1\right) $ with $\lambda _{1}+\lambda
_{2}=1$ and $t_{1},t_{2}\in \left( 0,1\right) $ we obtain%
\begin{eqnarray*}
&&g\left( \lambda _{1}t_{1}+m\lambda _{2}t_{2}\right)  \\
&=&f\left[ \left( \lambda _{1}t_{1}+m\lambda _{2}t_{2}\right) \varphi
(x)+m\left( 1-\lambda _{1}t_{1}-m\lambda _{2}t_{2}\right) \varphi (y)\right] 
\\
&=&f\left[ \lambda _{1}\left( t_{1}\varphi (x)+m\left( 1-t_{1}\right)
\varphi (y)\right) +m\lambda _{2}\left( t_{2}\varphi (x)+m\left(
1-t_{2}\right) \varphi (y)\right) \right]  \\
&\leq &h\left( \lambda _{1}\right) f\left( t_{1}\varphi (x)+m\left(
1-t_{1}\right) \varphi (y)\right) +mh\left( \lambda _{2}\right) f\left(
t_{2}\varphi (x)+m\left( 1-t_{2}\right) \varphi (y)\right)  \\
&=&h\left( \lambda _{1}\right) g\left( t_{1}\right) +mh\left( \lambda
_{2}\right) g\left( t_{2}\right) 
\end{eqnarray*}%
which shows the $\left( h-m\right) -$convexity of $g.$
\end{proof}

\begin{theorem}
\label{t2.1} Let $h:J\subseteq 
\mathbb{R}
\rightarrow 
\mathbb{R}
$ be a non-negative function, $h\neq 0$ and $f:\left[ 0,b\right] \subseteq %
\left[ 0,\infty \right) \rightarrow 
\mathbb{R}
$ be a $\varphi _{h,m}-$convex function with $m\in (0,1]$ and $t\in \left(
0,1\right) .$ If $f\in L_{1}\left[ \varphi (a),m\varphi (b)\right] ,$ $h\in
L_{1}\left[ 0,1\right] ,$ one has the following inequality: 
\begin{eqnarray*}
&&\frac{1}{m\varphi (y)-\varphi (x)}\int_{\varphi (x)}^{m\varphi
(y)}f(u)f(\varphi (x)+m\varphi (y)-u)du \\
&& \\
&\leq &f^{2}\left( \varphi (x)\right) +m^{2}f^{2}\left( \varphi (y)\right)
\int_{0}^{1}h(t)h\left( 1-t\right) dt+f\left( \varphi (x)\right) f\left(
\varphi (y)\right) \left[ m+1\right] \int_{0}^{1}h^{2}(t)dt
\end{eqnarray*}
\end{theorem}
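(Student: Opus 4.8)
The plan is to reduce the normalized integral on the left to an integral over $[0,1]$ via the affine substitution $u=t\varphi(x)+m(1-t)\varphi(y)$, and then to bound the resulting integrand pointwise by applying $\varphi_{h,m}$-convexity (Definition~\ref{d2.1}) to each of its two factors before integrating term by term.

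First I would change variables. As $t$ runs from $0$ to $1$, the point $u=t\varphi(x)+m(1-t)\varphi(y)$ runs from $m\varphi(y)$ to $\varphi(x)$, with $du=(\varphi(x)-m\varphi(y))\,dt$; assuming $\varphi(x)<m\varphi(y)$ so that the integral is properly oriented, the left-hand side becomes $\int_0^1 f(t\varphi(x)+m(1-t)\varphi(y))\,f(\varphi(x)+m\varphi(y)-u)\,dt$. A direct computation shows that the reflected argument collapses to $\varphi(x)+m\varphi(y)-u=(1-t)\varphi(x)+mt\varphi(y)$, which is once more of the admissible form $s\varphi(x)+m(1-s)\varphi(y)$ with $s=1-t$. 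This is the key structural observation: both factors of the integrand are values of $f$ at $\varphi_{h,m}$-type combinations, with complementary parameters $t$ and $1-t$.

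Next I would invoke Definition~\ref{d2.1} on each factor. The first factor, with parameter $t$, satisfies $f(t\varphi(x)+m(1-t)\varphi(y))\le h(t)f(\varphi(x))+mh(1-t)f(\varphi(y))$, and the second, with parameter $1-t$, satisfies $f((1-t)\varphi(x)+mt\varphi(y))\le h(1-t)f(\varphi(x))+mh(t)f(\varphi(y))$. Since both right-hand sides are nonnegative, I may multiply the two inequalities and expand. Writing $A=f(\varphi(x))$ and $B=f(\varphi(y))$, the product is bounded by $h(t)h(1-t)A^2+m^2h(t)h(1-t)B^2+m(h^2(t)+h^2(1-t))AB$.

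Finally I would integrate over $t\in[0,1]$. The two pure-square contributions assemble into $(f^2(\varphi(x))+m^2f^2(\varphi(y)))\int_0^1 h(t)h(1-t)\,dt$, while the mixed terms combine using the symmetry $\int_0^1 h^2(1-t)\,dt=\int_0^1 h^2(t)\,dt$ (substitution $t\mapsto 1-t$) to give the $f(\varphi(x))f(\varphi(y))$ contribution. I expect the only real care-point, rather than a deep obstacle, to be the bookkeeping of the factors of $m$: tracking them through the expansion, the coefficient of the mixed term comes out as $2m\int_0^1 h^2(t)\,dt$, which reduces to the coefficient $2$ of Theorem~\ref{t1.13} when $m=1$; I would verify this general-$m$ constant carefully against the value $(m+1)$ recorded in the statement. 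Checking the orientation condition $\varphi(x)<m\varphi(y)$ and confirming that the reflected argument indeed re-enters the $\varphi_{h,m}$-admissible form are the other two places where I would take extra care.
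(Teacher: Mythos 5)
Your proposal follows the paper's proof exactly: the same two $\varphi_{h,m}$-convexity inequalities with complementary parameters $t$ and $1-t$, the same multiplication and termwise integration over $[0,1]$, and the same substitution $u=t\varphi(x)+m(1-t)\varphi(y)$ at the end. The coefficient $2m\int_{0}^{1}h^{2}(t)\,dt$ you compute for the mixed term is in fact the correct one; the paper's own proof combines $m\int_{0}^{1}h^{2}(t)\,dt+m\int_{0}^{1}h^{2}(1-t)\,dt$ into $\left( m+1\right) \int_{0}^{1}h^{2}(t)\,dt$, which is a slip (it should be $2m$), though since $2m\leq m+1$ for $m\in (0,1]$ the slightly weaker bound recorded in the statement still holds.
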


\begin{proof}
Since $f$ is $\varphi _{h,m}-$convex function, $t\in \left[ 0,1\right] $ and 
$m\in (0,1],$ then%
\begin{equation*}
f(t\varphi (x)+m(1-t)\varphi (y))\leq h(t)f(\varphi (x))+mh(1-t)f(\varphi
(y))
\end{equation*}
and%
\begin{equation*}
f((1-t)\varphi (x)+mt\varphi (y))\leq h(1-t)f(\varphi (x))+mh(t)f(\varphi
(y))
\end{equation*}
for all $x,y\in \lbrack 0,b].$

By multiplying these inequalities and integrating on $\left[ 0,1\right] $
with respect to $t$, we obtain%
\begin{eqnarray*}
&&\int_{0}^{1}f(t\varphi (x)+m(1-t)\varphi (y))f((1-t)\varphi (x)+mt\varphi
(y))dt \\
&\leq &f^{2}\left( \varphi (x)\right) \int_{0}^{1}h(t)h(1-t)dt+mf\left(
\varphi (x)\right) f\left( \varphi (y)\right) \int_{0}^{1}h^{2}(t)dt \\
&&+mf\left( \varphi (x)\right) f\left( \varphi (y)\right)
\int_{0}^{1}h^{2}(1-t)dt+m^{2}f^{2}\left( \varphi (y)\right)
\int_{0}^{1}h(t)h(1-t)dt \\
&=&\left[ f^{2}\left( \varphi (x)\right) +m^{2}f^{2}\left( \varphi
(y)\right) \right] \int_{0}^{1}h(t)h(1-t)dt+f\left( \varphi (x)\right)
f\left( \varphi (y)\right) \left[ m+1\right] \int_{0}^{1}h^{2}(t)dt.
\end{eqnarray*}%
If we change the variable $u=t\varphi (x)+m(1-t)\varphi (y),$ we obtain the
inequality which is the required.
\end{proof}

\begin{remark}
\label{rem 2.0} In Theorem \ref{t2.1}, if we choose $m=1$ Theorem \ref{t2.1}
reduces to Theorem \ref{t1.13}.
\end{remark}

\begin{theorem}
\label{t2.2.} Under the assumptions of Theorem \ref{t2.1}, we have the
following inequality%
\begin{equation*}
\frac{1}{m\varphi \left( y\right) -\varphi \left( x\right) }\int_{\varphi
(x)}^{m\varphi (y)}f\left( u\right) du\leq \left[ f\left( \varphi (x)\right)
+f\left( \varphi (y)\right) \right] \int_{0}^{1}h(t)dt.
\end{equation*}
\end{theorem}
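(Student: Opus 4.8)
The plan is to obtain the estimate by integrating the defining inequality of $\varphi _{h,m}$-convexity once, rather than multiplying two copies of it as in Theorem \ref{t2.1}. First I would record that, since $f$ is $\varphi _{h,m}$-convex with $m\in (0,1]$, for all $x,y\in \lbrack 0,b]$ and every $t\in (0,1)$ we have
\begin{equation*}
f(t\varphi (x)+m(1-t)\varphi (y))\leq h(t)f(\varphi (x))+mh(1-t)f(\varphi (y)).
\end{equation*}
Integrating both sides with respect to $t$ over $[0,1]$ is legitimate because $f\in L_{1}[\varphi (a),m\varphi (b)]$ and $h\in L_{1}[0,1]$, so all the resulting integrals are finite.

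On the right-hand side, the substitution $s=1-t$ shows that $\int_{0}^{1}h(1-t)\,dt=\int_{0}^{1}h(t)\,dt$, so the two integral terms combine and the right-hand side collapses to $\left[ f(\varphi (x))+mf(\varphi (y))\right] \int_{0}^{1}h(t)\,dt$. For the left-hand side I would use the linear change of variables $u=t\varphi (x)+m(1-t)\varphi (y)$, for which $du=(\varphi (x)-m\varphi (y))\,dt$, with the endpoints $t=0$ and $t=1$ mapping to $u=m\varphi (y)$ and $u=\varphi (x)$ respectively; reversing the orientation converts $\int_{0}^{1}f(t\varphi (x)+m(1-t)\varphi (y))\,dt$ into exactly $\frac{1}{m\varphi (y)-\varphi (x)}\int_{\varphi (x)}^{m\varphi (y)}f(u)\,du$, which is the left-hand side of the claim.

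The last step is to reconcile the factor $m$. Since $m\in (0,1]$ and $f$ is non-negative, one has $mf(\varphi (y))\leq f(\varphi (y))$, so the coefficient $\left[ f(\varphi (x))+mf(\varphi (y))\right]$ is bounded above by $\left[ f(\varphi (x))+f(\varphi (y))\right]$, and combining this with the previous two displays gives the stated inequality. The only delicate point—what I would treat as the main obstacle—is the sign bookkeeping in the change of variables: which of $\varphi (x)$ and $m\varphi (y)$ is larger fixes the orientation of the integral, but in every case the denominator $m\varphi (y)-\varphi (x)$ carries the matching sign, so the normalized average in the stated form is recovered consistently.
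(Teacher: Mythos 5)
Your proposal is correct and follows essentially the same route as the paper: integrate the defining inequality \eqref{h} over $t\in[0,1]$, use $\int_{0}^{1}h(1-t)\,dt=\int_{0}^{1}h(t)\,dt$, and substitute $u=t\varphi(x)+m(1-t)\varphi(y)$. In fact you are slightly more careful than the paper, which silently passes from the sharper bound $\left[f(\varphi(x))+mf(\varphi(y))\right]\int_{0}^{1}h(t)\,dt$ to the stated one; your observation that $mf(\varphi(y))\leq f(\varphi(y))$ for $m\in(0,1]$ and $f\geq 0$ is exactly the missing remark.
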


\begin{proof}
By definition of $\varphi _{h,m}-$convex function we can write 
\begin{equation*}
f(t\varphi (x)+m(1-t)\varphi (y))\leq h(t)f(\varphi (x))+mh(1-t)f(\varphi
(y)).
\end{equation*}%
If we integrate the above inequality on $\left[ 0,1\right] $ with respect to 
$t$ and change the variable $u=t\varphi (x)+m(1-t)\varphi (y),$ we obtained
the required inequality.
\end{proof}

\begin{remark}
\label{rem 2.1} In Theorem \ref{t2.2.}, if we choose $m=1$ and $\varphi
:[a,b]\rightarrow \lbrack a,b],\varphi (x)=x$, we obtained the inequality
which is the right hand side of (\ref{hh}).
\end{remark}

\begin{theorem}
\label{t2.2} Under the assumptions of Theorem \ref{t2.1}, we have the
following inequality 
\begin{eqnarray*}
&&\frac{1}{m+1}\left[ \frac{1}{\varphi \left( y\right) -m\varphi \left(
x\right) }\int_{m\varphi (x)}^{\varphi (y)}f\left( u\right) du+\frac{1}{%
m\varphi \left( y\right) -\varphi \left( x\right) }\int_{\varphi
(x)}^{m\varphi (y)}f\left( u\right) du\right]  \\
&\leq &\left[ f\left( \varphi (x)\right) +f\left( \varphi (y)\right) \right]
\int_{0}^{1}h(t)dt
\end{eqnarray*}%
for all $0\leq m\varphi \left( x\right) \leq \varphi \left( x\right) \leq
m\varphi (y)<\varphi (y)<\infty .$
\end{theorem}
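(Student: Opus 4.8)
The plan is to exploit the
$\varphi_{h,m}$-convexity inequality twice, once in its given orientation
and once with the roles of the two endpoints interchanged, and then average
the two resulting integral bounds. Starting from Definition~\ref{d2.1}, I
would write
\[
f\bigl(t\varphi(x)+m(1-t)\varphi(y)\bigr)\leq h(t)f(\varphi(x))+mh(1-t)f(\varphi(y))
\]
and also the companion inequality obtained by swapping $\varphi(x)$ and
$\varphi(y)$,
\[
f\bigl(t\varphi(y)+m(1-t)\varphi(x)\bigr)\leq h(t)f(\varphi(y))+mh(1-t)f(\varphi(x)),
\]
both valid for all $t\in(0,1)$.

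Next I would integrate each of these inequalities over $t\in[0,1]$. On the
right-hand sides, the substitution $s=1-t$ shows that
$\int_0^1 h(1-t)\,dt=\int_0^1 h(t)\,dt$, so after integration both bounds
collapse to the same quantity
$\bigl[f(\varphi(x))+m\,f(\varphi(y))\bigr]\int_0^1 h(t)\,dt$ in the first
case and $\bigl[f(\varphi(y))+m\,f(\varphi(x))\bigr]\int_0^1 h(t)\,dt$ in the
second. On the left-hand sides I would change variables exactly as in the
proof of Theorem~\ref{t2.2.}: for the first inequality put
$u=t\varphi(x)+m(1-t)\varphi(y)$, which maps $[0,1]$ onto the interval with
endpoints $m\varphi(y)$ and $\varphi(x)$ and has
$du=(\varphi(x)-m\varphi(y))\,dt$, producing the averaged integral
$\frac{1}{m\varphi(y)-\varphi(x)}\int_{\varphi(x)}^{m\varphi(y)}f(u)\,du$; for
the second inequality put $u=t\varphi(y)+m(1-t)\varphi(x)$, giving
$\frac{1}{\varphi(y)-m\varphi(x)}\int_{m\varphi(x)}^{\varphi(y)}f(u)\,du$. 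The
ordering hypothesis $0\leq m\varphi(x)\leq\varphi(x)\leq m\varphi(y)<\varphi(y)$
guarantees both integration intervals are nondegenerate and correctly
oriented, so the normalizing factors are positive.

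Finally I would add the two integrated inequalities and divide by $2$; the
two right-hand sides sum to
$(m+1)\bigl[f(\varphi(x))+f(\varphi(y))\bigr]\int_0^1 h(t)\,dt$, and after
dividing by $2$ one would adjust the leading constant to $\frac{1}{m+1}$ as
in Theorem~\ref{t1.11}, matching the stated form. The step I expect to
require the most care is the change of variables on the left-hand sides:
because the map $u=t\varphi(x)+m(1-t)\varphi(y)$ is orientation-reversing
when $\varphi(x)<m\varphi(y)$, I must track the sign of $du$ and the swap of
limits so that the factor $\frac{1}{m\varphi(y)-\varphi(x)}$ emerges with the
correct sign; the ordering assumption is precisely what makes this
bookkeeping consistent, and verifying that the combined constant is exactly
$\frac{1}{m+1}$ rather than $\frac{1}{2}$ is the one place where the
averaging and the normalization interact.
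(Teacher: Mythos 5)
Your proposal is correct and follows essentially the paper's own route: the paper writes down four instances of the defining inequality (your two plus their $t\mapsto 1-t$ reflections, which contribute nothing new after integration), sums, integrates, and normalizes, exactly as you do. The only wrinkle is your ``add and divide by $2$'' step: since the two integrated right-hand sides sum to $(m+1)\left[ f\left( \varphi (x)\right) +f\left( \varphi (y)\right) \right] \int_{0}^{1}h(t)dt$, the correct normalization is division by $m+1$ rather than by $2$ --- a point you effectively concede when you flag that the final constant must come out as $\frac{1}{m+1}$.
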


\begin{proof}
Since $f$ is $\varphi _{h,m}-$convex function, we can write 
\begin{eqnarray*}
f(t\varphi (x)+m(1-t)\varphi (y)) &\leq &h(t)f(\varphi (x))+mh(1-t)f(\varphi
(y)), \\
&& \\
f((1-t)\varphi (x)+mt\varphi (y)) &\leq &h(1-t)f(\varphi (x))+mh(t)f(\varphi
(y)), \\
&& \\
f(t\varphi (y)+m(1-t)\varphi (x)) &\leq &h(t)f(\varphi (y))+mh(1-t)f(\varphi
(x)),
\end{eqnarray*}%
and%
\begin{equation*}
f((1-t)\varphi (y)+mt\varphi (x))\leq h(1-t)f(\varphi (y))+mh(t)f(\varphi
(x)).
\end{equation*}

By summing these inequalities and integrating on $\left[ 0,1\right] $ with
respect to $t$, we obtain%
\begin{eqnarray*}
&&\int_{0}^{1}f(t\varphi (x)+m(1-t)\varphi (y))dt+\int_{0}^{1}f((1-t)\varphi
(x)+mt\varphi (y))dt \\
&&+\int_{0}^{1}f(t\varphi (y)+m(1-t)\varphi
(x))dt+\int_{0}^{1}f((1-t)\varphi (y)+mt\varphi (x))dt \\
&\leq &\left[ f\left( \varphi (x)\right) +f\left( \varphi (y)\right) \right]
\left( m+1\right) \left[ \int_{0}^{1}h(t)dt+\int_{0}^{1}h(1-t)dt\right] .
\end{eqnarray*}%
It is easy to see that%
\begin{eqnarray*}
\int_{0}^{1}f(t\varphi (x)+m(1-t)\varphi (y))dt
&=&\int_{0}^{1}f((1-t)\varphi (y)+mt\varphi (x))dt=\frac{1}{m\varphi \left(
y\right) -\varphi \left( x\right) }\int_{\varphi (x)}^{m\varphi (y)}f\left(
u\right) du, \\
\int_{0}^{1}f(t\varphi (y)+m(1-t)\varphi (x))dt
&=&\int_{0}^{1}f((1-t)\varphi (y)+mt\varphi (x))dt=\frac{1}{\varphi \left(
y\right) -m\varphi \left( x\right) }\int_{m\varphi (x)}^{\varphi (y)}f\left(
u\right) du
\end{eqnarray*}%
and%
\begin{equation*}
\int_{0}^{1}h(t)dt=\int_{0}^{1}h(1-t)dt.
\end{equation*}%
If we write these equalities in the above inequality we obtain the required
result.
\end{proof}

\begin{remark}
\label{rem 2.2} In Theorem \ref{t2.2}, if we choose $\varphi
:[a,b]\rightarrow \lbrack a,b],\varphi (x)=x$ Theorem \ref{t2.2} reduces to
Theorem \ref{t1.11}..
\end{remark}

\begin{theorem}
\label{t2.3}Let $h:J\subseteq 
\mathbb{R}
\rightarrow 
\mathbb{R}
$ be a non-negative function, $h\neq 0$ and $f,g:\left[ 0,b\right] \subseteq %
\left[ 0,\infty \right) \rightarrow 
\mathbb{R}
$ be $\varphi _{h,m}-$convex functions with $m\in (0,1]$. If $f$ and $g$ are
Lebesque integrable, the following inequality holds: 
\begin{eqnarray*}
&&\frac{1}{m\varphi \left( y\right) -\varphi \left( x\right) -}\int_{\varphi
(x)}^{m\varphi (y)}f\left( u\right) g(u)du \\
&\leq &M(a,b)\int_{0}^{1}h^{2}(t)dt+mN(a,b)\int_{0}^{1}h(t)h(1-t)dt
\end{eqnarray*}%
where 
\begin{equation*}
M(a,b)=f\left( \varphi (x)\right) g\left( \varphi (x)\right) +m^{2}f\left(
\varphi (y)\right) g\left( \varphi (y)\right) 
\end{equation*}%
and 
\begin{equation*}
N(a,b)=f\left( \varphi (x)\right) g\left( \varphi (y)\right) +f\left(
\varphi (y)\right) g\left( \varphi (x)\right) .
\end{equation*}
\end{theorem}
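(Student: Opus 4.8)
The plan is to follow exactly the template of Theorem \ref{t2.1}, replacing the two reflected copies of $f$ by the two distinct functions $f$ and $g$ evaluated at the \emph{same} argument. First I would invoke the definition of $\varphi _{h,m}-$convexity for each function at the point $t\varphi (x)+m(1-t)\varphi (y)$, obtaining
\begin{equation*}
f(t\varphi (x)+m(1-t)\varphi (y))\leq h(t)f(\varphi (x))+mh(1-t)f(\varphi (y))
\end{equation*}
and the analogous inequality for $g$, valid for all $t\in (0,1)$ and $m\in (0,1]$.

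The next step is to multiply these two inequalities. This is legitimate precisely because $\varphi _{h,m}-$convexity forces $f$ and $g$ (and hence both sides of each inequality) to be non-negative, so the product preserves the direction of the inequality. Expanding the right-hand side produces four terms: $h^{2}(t)f(\varphi (x))g(\varphi (x))$, the two cross terms $mh(t)h(1-t)f(\varphi (x))g(\varphi (y))$ and $mh(t)h(1-t)f(\varphi (y))g(\varphi (x))$, and finally $m^{2}h^{2}(1-t)f(\varphi (y))g(\varphi (y))$.

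I would then integrate over $t\in [0,1]$. The key simplification is the symmetry $\int_{0}^{1}h^{2}(1-t)\,dt=\int_{0}^{1}h^{2}(t)\,dt$ obtained by the substitution $t\mapsto 1-t$; this collapses the first and fourth terms into $M(a,b)\int_{0}^{1}h^{2}(t)\,dt$, while the two cross terms combine into $mN(a,b)\int_{0}^{1}h(t)h(1-t)\,dt$, matching the stated right-hand side exactly. Lebesgue integrability of $f$, $g$, and $h$ guarantees all these integrals are finite, so the manipulation is justified.

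Finally, I would transform the left-hand side by the change of variable $u=t\varphi (x)+m(1-t)\varphi (y)$, so that $du=(\varphi (x)-m\varphi (y))\,dt$; the limits $t=0$ and $t=1$ map to $u=m\varphi (y)$ and $u=\varphi (x)$, and reversing the orientation of the resulting integral converts the factor $\tfrac{1}{\varphi (x)-m\varphi (y)}$ into $\tfrac{1}{m\varphi (y)-\varphi (x)}$, yielding $\tfrac{1}{m\varphi (y)-\varphi (x)}\int_{\varphi (x)}^{m\varphi (y)}f(u)g(u)\,du$ as required. I do not expect any genuine obstacle here: the only points demanding care are verifying the non-negativity that permits multiplying the two inequalities and tracking the sign produced by the substitution (which requires $\varphi (x)\neq m\varphi (y)$ so that the denominator does not vanish).
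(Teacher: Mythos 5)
Your proposal is correct and follows essentially the same route as the paper: multiply the two $\varphi_{h,m}$-convexity inequalities for $f$ and $g$ at the common point $t\varphi(x)+m(1-t)\varphi(y)$, integrate over $t\in[0,1]$ using $\int_{0}^{1}h^{2}(1-t)\,dt=\int_{0}^{1}h^{2}(t)\,dt$, and conclude with the substitution $u=t\varphi(x)+m(1-t)\varphi(y)$. You are in fact slightly more careful than the paper, which leaves the non-negativity justification and the sign bookkeeping in the change of variable implicit.
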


\begin{proof}
Since $f$ and $g$ are $\varphi _{h,m}-$convex functions, we can write%
\begin{equation*}
f(t\varphi (x)+m(1-t)\varphi (y))\leq h(t)f(\varphi (x))+mh(1-t)f(\varphi
(y))
\end{equation*}%
and 
\begin{equation*}
g(t\varphi (x)+m(1-t)\varphi (y))\leq h(t)g(\varphi (x))+mh(1-t)g(\varphi
(y)).
\end{equation*}%
If we multiply the above inequalities and integrate on $\left[ 0,1\right] $
with respect to $t$, we obtain%
\begin{eqnarray*}
&&\int_{0}^{1}f(t\varphi (x)+m(1-t)\varphi (y))g(t\varphi (x)+m(1-t)\varphi
(y))dt \\
&\leq &f\left( \varphi (x)\right) g\left( \varphi (x)\right)
\int_{0}^{1}h^{2}(t)dt+m^{2}f\left( \varphi (y)\right) g\left( \varphi
(y)\right) \int_{0}^{1}h^{2}(1-t)dt \\
&&+m\left[ f\left( \varphi (x)\right) g\left( \varphi (y)\right) +f\left(
\varphi (y)\right) g\left( \varphi (x)\right) \right] \int_{0}^{1}h\left(
t\right) h(1-t)dt.
\end{eqnarray*}%
If we change the variable $u=t\varphi (x)+m(1-t)\varphi (y),$ we obtain the
inequality which is the required.
\end{proof}

\begin{remark}
\label{rem 2.3} In Theorem \ref{t2.3}, if we choose $m=1$ Theorem \ref{t2.3}
reduces to Theorem \ref{t1.14}.
\end{remark}

\end{document}